\renewcommand{\mathbf}{\mathbold}
\numberwithin{equation}{section}
\newtheorem{Theorem}[equation]{Theorem}
\newtheorem{Proposition}[equation]{Proposition}
\newtheorem{Corollary}[equation]{Corollary}
\theoremstyle{definition}
\newtheorem{Remark}[equation]{Remark}
\newtheorem{eg}[equation]{Example}
\newcommand{\cF}{\mathcal{F}}
\newcommand{\cG}{\mathcal{G}}
\newcommand{\cN}{\mathcal{N}}
\newcommand{\cO}{\mathcal{O}}
\newcommand{\cS}{\mathcal{S}}
\newcommand{\cT}{\mathcal{T}}
\newcommand{\cV}{\mathcal{V}}
\newcommand{\br}{\mathbf{r}}
\renewcommand{\AA}{\mathbb{A}}
\newcommand{\OO}{\mathbb{O}}
\newcommand{\PP}{\mathbb{P}}
\newcommand{\VV}{\mathbb{V}}
\renewcommand{\phi}{\varphi}
\renewcommand{\tilde}[1]{\widetilde{#1}}
\newcommand{\leftexp}[2]{\vphantom{#2}^{#1} #2}
\def\Ddots{\mathinner{\mkern1mu\raise\p@
\vbox{\kern7\p@\hbox{.}}\mkern2mu
\raise4\p@\hbox{.}\mkern2mu\raise7\p@\hbox{.}\mkern1mu}}
\DeclareMathOperator{\Spec}{Spec}
\newcommand{\suchthat}{\mid}
\newcommand{\textand}{\text{ }\mathrm{and}\text{ }}
\newcommand{\kk}{\Bbbk}
\DeclareMathOperator{\Mat}{Mat}
\newcommand{\affGr}{\cG r}
\newcommand{\Wedge}{\bigwedge}
\newcommand{\sh}{sh}
\newcommand{\Hom}{\operatorname{Hom}}
\newcommand{\primeST}{\leftexp{'}{\cS^T}}
\newcommand{\GL}{GL}
\newcommand{\SL}{SL}
\newcommand{\Gr}{\mathrm{Gr}}
\newcommand{\SGr}{\mathrm{SGr}}
\begin{document}

\title[On a conjecture of Pappas and Rapoport]{On a conjecture of Pappas and Rapoport \\ about the standard local model for $\GL_d$}
\author{Dinakar Muthiah}
\address{Kavli Institute for the Physics and Mathematics of the Universe (WPI), The University of Tokyo Institutes for Advanced Study, The University of Tokyo, Kashiwa, Chiba 277-8583, Japan}
\email{dinakar.muthiah@ipmu.jp}
\author{Alex Weekes}
\address{Department of Mathematics, University of British Columbia}
\email{weekesal@math.ubc.ca}
\author{Oded Yacobi}
\address{School of Mathematics and Statistics, University of Sydney}
\email{oded.yacobi@sydney.edu.au}
\maketitle

\begin{abstract}
  In their study of local models of Shimura varieties for totally ramified extensions, Pappas and Rapoport posed a conjecture about the reducedness of a certain subscheme of $n \times n$ matrices. We give a positive answer to their conjecture in full generality. Our main ideas follow naturally from two of our previous works. The first is our proof of a conjecture of Kreiman, Lakshmibai, Magyar, and Weyman on the equations defining type A affine Grassmannians. The second is the work of the first two authors and Kamnitzer on affine Grassmannian slices and their reduced scheme structure. We also present a version of our argument that is almost completely elementary: the only non-elementary ingredient is the Frobenius splitting of Schubert varieties.
\end{abstract}

\section{Introduction}

Shimura varieties serve as a bridge between arithmetic geometry and automorphic forms, and as such they play a central role in the Langlands program.
Rapoport and Zink (\cite{Rapoport-Zink}) introduced the study of \emph{local models} of Shimura varieties.  These local models are intended to capture the behaviour that occurs when considering reduction mod $p$ and often allow one to reduce arithmetic problems to questions about affine Grassmannians and flag varieties. These problems tend to be difficult but often tractible (e.g.~\cite{He,Zhu-2014,Haines-Richarz}). 

Local models are defined over the ring of integers $\cO_E$ of a local field $E$ (the reflex field).  An essential desideratum is that the local model be flat over $\cO_E$, which involves a set-theoretic condition called topological flatness and a scheme-theoretic condition about the location of embedded primes (see e.g.~\cite[\S 14.3]{Gortz-Wedhorn}). For example, a topologically-flat family over $\cO_E$ with reduced special fiber is flat.

In \cite{Pappas-Rapoport}, Pappas and Rapoport study the {\em standard (local) model for $\GL_d$} in the case of a totally ramified extension. As Pappas previously observed (\cite{Pappas}), this model is not topologically flat in general. Topological flatness does hold in the cases where, in their notation, the $(r_\phi)_\phi$ differ by at most $1$ (\cite[Proposition 3.2]{Pappas-Rapoport}).  In these cases, flatness of the standard model would follow from the reducedness of the following subscheme of $n \times n$ matrices (\cite[Theorem B (iii), Corollary 5.9]{Pappas-Rapoport}):
\begin{align}
  \label{eq:cN-n-e-space}
   \cN_{n,e} = \left\{ A \in \Mat_{n\times n} \suchthat  \ A^e = 0, \  \ \det( \lambda  - A) = \lambda^n \right\}
\end{align}
Pappas and Rapoport state this as \cite[Conjecture 5.8]{Pappas-Rapoport}. Our main result is a positive answer to their conjecture.

A consequence of our result is that local models in this case represent explicit moduli functors.  Additionally, Pappas and Rapoport prove interesting results about nilpotent orbits and affine Grassmannians conditional upon the reducedness of $\cN_{n,e}$ (\cite[Equation 5.26, Proposition 6.5, Proposition 6.6]{Pappas-Rapoport}). We can now state these results unconditionally.

Questions of flatness of local models have been studied by other authors. G\"ortz proved flatness of unramified local models for the general linear and symplectic groups \cite{Gortz-2001,Gortz-2003}. We mention also results on topological flatness by G\"ortz and by Smithling (\cite{Gortz-2005,Smithling-2011a,Smithling-2011b,Smithling-2014}).

 \subsubsection{Weyman's work}
 Soon after the Pappas-Rapoport conjecture was first announced in 2000, Weyman (\cite{Weyman}) gave proofs of the conjecture (i) when the base field is characteristic zero, and (ii) when the base field has arbitrary characteristic and $e = 2$.  Our main contribution is proving the conjecture in positive characteristic, which is the interesting case for arithmetic applications. We make no assumptions on the characteristic, so our argument also gives a new proof in characteristic zero.

\subsection{How the space $\cN_{n,e}$ appears }

We recall very briefly how $\cN_{n,e}$ appears in the work of Pappas and Rapoport.  We refer the reader to \cite[\S 2,3,4]{Pappas-Rapoport} for the details. This discussion is for motivation, and will not be needed for the rest of the paper. 

Fix a complete discretely valued field $F_0$ with perfect residue field, and let $F/F_0$ be a totally ramified extension of degree $e$. Fix a uniformizer $\pi_0$ of $F_0$, and let $\pi$ be a uniformizer of $F$ which is a root of the Eisenstein polynomial of the extension (see \cite[Equation 2.1]{Pappas-Rapoport}). Fix a positive integer $d$ and a rank-$d$ free module $\Lambda$ for the ring of integers $\cO_{F}$ of $F$. Finally, for each embedding $\phi: F \rightarrow F_0^{\text{sep}}$ of $F$ into a fixed separable closure of $F_{0}$, one fixes an integer $r_\phi$ such that $0 \leq r_{\phi} \leq d$. 

Associated to this data, one defines the \emph{reflex field} $E$ \cite[Equation (2.3)]{Pappas-Rapoport} and its ring of integers $\cO_{E}$. Pappas and Rapoport define a scheme $M$, which they call the \emph{standard model for $GL_{d}$ corresponding to $\br = (r_\phi)_\phi$}. This is a scheme over $\Spec \cO_{E}$.  Additionally, they construct two auxiliary objects $\tilde{M}$ and $N = N(\br)$, which are also schemes over $\cO_{E}$. These three spaces are related by maps:
\begin{align}
  \label{eq:three-spaces}
M \stackrel{\pi}{\longleftarrow} \widetilde{M} \stackrel{\varphi}{\longrightarrow} N
\end{align}
One of Pappas and Rapoport's main theorems is the following.
\begin{Theorem}{\cite[Theorem 4.1]{Pappas-Rapoport}}
 The morphism $\widetilde{M} \stackrel{\varphi}{\longrightarrow} N$ is smooth.
\end{Theorem}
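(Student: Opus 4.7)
The plan is to establish smoothness of $\varphi$ by exhibiting it as a torsor under a smooth group scheme over $\cO_E$. In the setup of \cite[\S3]{Pappas-Rapoport}, the scheme $N = N(\br)$ parametrizes the linear-algebra data, namely matrices recording the action of the uniformizer $\pi$ on a suitable quotient of $\Lambda$, subject to the characteristic-polynomial constraints coming from $(r_\phi)_\phi$. The auxiliary scheme $\widetilde{M}$ augments the $M$-data with a choice of basis (trivialization) of the relevant module, and $\varphi$ reads off the matrix representing $\pi$ in that basis. Forgetting the submodule, what is left over a fixed matrix is exactly the set of bases realizing it; this is the geometric reason to expect a torsor structure.

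First, I would unpack the definitions to single out the relevant smooth group scheme $G$ over $\cO_E$, which will be a product of general linear groups (indexed by the embeddings $\phi: F \to F_0^{\text{sep}}$, or by the pieces appearing in the decomposition of $\Lambda \otimes_{\cO_{F_0}} \cO_E$ governed by the Eisenstein polynomial for $\pi$). This $G$ acts on $\widetilde{M}$ by change of trivialization, compatibly with the $\cO_F$-structure, and the action preserves the fibers of $\varphi$ by construction.

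Second, I would check that $\widetilde{M} \to N$ is a $G$-torsor in the fppf topology. This involves two things: that the $G$-action is free (no nontrivial change of basis fixes both the matrix and the remaining data), and that it is transitive on geometric fibers (any two trivializations giving the same matrix differ by an element of $G$). The substantive point is the fppf-local existence of a lift: given an $R$-point of $N$, i.e.\ a matrix $A$ satisfying the constraints, one must produce, locally on $\Spec R$, a compatible trivialization realizing it. Once the torsor structure is established, smoothness of $\varphi$ is automatic from smoothness of $G$.

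The main obstacle is the fppf-local existence step, particularly over the special fiber of $\Spec \cO_E$, where characteristic-$p$ phenomena can obstruct naive lifting. The constraints built into $N$ are designed precisely so that this obstruction vanishes, but verifying this requires a careful analysis of the $\phi$-indexed decomposition together with the Eisenstein condition on $\pi$. Once that combinatorial check is done, the infinitesimal lifting criterion is satisfied automatically by the torsor presentation, giving smoothness of $\varphi$ of the expected relative dimension $\dim G$.
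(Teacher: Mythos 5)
There is a genuine gap here: $\varphi$ is not a torsor under any fixed smooth group scheme, and the group you propose does not even preserve its fibers. The product of general linear groups acting by change of trivialization is the structure group of the \emph{other} map in the diagram \eqref{eq:three-spaces}, namely $\pi : \widetilde{M} \to M$, which forgets the trivialization. Acting by change of basis \emph{conjugates} the matrix $\varphi(U,\psi) = \psi^{-1} \circ T \circ \psi$, so the subgroup preserving a fiber of $\varphi$ over a point $A$ is the centralizer of $A$; its dimension jumps with the Jordan type of $A$, it is not smooth or of constant dimension over the base, and its dimension does not match the constant relative dimension $nd$ of $\varphi$. So the torsor strategy fails at the first step (``the action preserves the fibers of $\varphi$ by construction'' is false) and cannot be repaired by passing to stabilizers.

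The actual argument exhibits $\widetilde{M}$ as an open subscheme of a \emph{vector bundle} over $N$, not a torsor. In the base-changed setting treated in \S\ref{sec:smoothness-of-the-map-phi}: for a test ring $R$ put $S = R[t]/t^e$; a point $A \in \cN_{n,e}(R)$ makes $R^n$ into an $S$-module $R^n_A$, and $T$ makes $V \otimes R$ into an $S$-module; one sets $\cV(R) = \{(A,\phi) \suchthat A \in \cN_{n,e}(R),\ \phi \in \Hom_S(R^n_A, V\otimes R)\}$. The fiber of $\varphi$ over $A$ is the open locus of maximal-rank $\phi$ inside the $R$-module $\Hom_S(R^n_A, V\otimes R)$ --- an open subset of an affine space, with no simply transitive group action in sight. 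The substantive input, which plays the role of your ``fppf-local lifting'' step but is of a different nature, is that $T$ has Jordan type $(e^d)$, so $V \otimes R$ is a \emph{free} $S$-module of rank $d$; one then checks (using that $S$ is a Frobenius algebra over $R$, so $\Hom_S(M,S) \cong \Hom_R(M,R)$) that $\Hom_S(R^n_A, V\otimes R)$ is free of rank $nd$ over $R$ uniformly in $A$, whence $\cV \to \cN_{n,e}$ is a vector bundle of rank $nd$ and $\varphi$ is smooth of relative dimension $nd$. This freeness hypothesis is essential and has no analogue in your outline: for a general $T$ with $T^e = 0$ the map $\cV \to \cN_{n,e}$ is not well behaved and $\varphi$ need not be smooth, so no amount of analysis of the constraints defining $N$ alone can close the gap.
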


 Let $\kk$ be the residue field of $\cO_E$. Let $V = \Lambda \otimes_{\cO_{F_0}} \kk$ and let $T = \pi \otimes \text{id}_{\kk}$ (these are denoted $W$ and $\Pi$ in \cite[Equation 2.9]{Pappas-Rapoport}). Let $n = \sum r_\phi$. Upon base changing to $\kk$, it is apparent from the definitions that \eqref{eq:three-spaces} becomes:
\begin{align}
  \label{eq:three-spaces-basechanged}
\leftexp{'}{\cS^T} \stackrel{\pi}{\longleftarrow} \tilde{\leftexp{'}{\cS^T}} \stackrel{\varphi}{\longrightarrow} \cN_{n,e}
\end{align}
Here $\leftexp{'}{\cS^T} $ is the subscheme of the Grassmannian $\Gr(n,V)$ consisting of $T$-invariant $n$-planes $U$ such that $\det(\lambda - T\vert_U) = \lambda^n$, $\tilde{\leftexp{'}{\cS^T}}$ is its preimage in the Stiefel variety, and the map $\phi$ comes from a canonical map from $\tilde{\leftexp{'}{\cS^T}}$ to $\Mat_{n \times n}$. See \S \ref{sec:grassmannians-and-associated-schemes} and \S \ref{sec:smoothness-of-the-map-phi} for further explanation.
By base change, we immediately have  the following corollary.
\begin{Corollary}
 The morphism $\tilde{\leftexp{'}{\cS^T}} \stackrel{\varphi}{\longrightarrow} \cN_{n,e}$ is smooth.
\end{Corollary}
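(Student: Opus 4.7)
The plan is to deduce this corollary from the preceding Pappas--Rapoport Theorem 4.1 purely by base change. That theorem asserts that $\varphi \colon \widetilde{M} \to N$ is smooth as a morphism of $\cO_E$-schemes, and smoothness of morphisms is preserved under arbitrary base change. So the first step is to tensor with the residue field $\kk$ of $\cO_E$ and observe that the induced morphism
\[
\widetilde{M} \times_{\Spec \cO_E} \Spec \kk \;\longrightarrow\; N \times_{\Spec \cO_E} \Spec \kk
\]
is again smooth.

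The remaining step is to identify these base changes with $\tilde{\leftexp{'}{\cS^T}}$ and $\cN_{n,e}$ respectively, and to check that the induced morphism between them agrees with the $\varphi$ of \eqref{eq:three-spaces-basechanged}. This is exactly the identification already asserted in the discussion above: upon specializing $\pi \otimes 1$ to $T$ and reducing modulo the maximal ideal of $\cO_E$, the moduli descriptions of $\widetilde{M}$ and $N$ from \cite[\S 2--4]{Pappas-Rapoport} become the spaces defined after \eqref{eq:three-spaces-basechanged}. In particular, the Eisenstein relation satisfied by $\pi$ specializes modulo the uniformizer of $\cO_{F_0}$ to the relation $A^e = 0$, and the characteristic-polynomial condition defining $N(\br)$ specializes to $\det(\lambda - A) = \lambda^n$; together these cut out $\cN_{n,e}$.

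The only ``obstacle'' is therefore the bookkeeping of these identifications, which is a purely formal unwinding of the definitions of $M$, $\widetilde{M}$, $N$ recalled in \S\ref{sec:grassmannians-and-associated-schemes} and \S\ref{sec:smoothness-of-the-map-phi}; once it is in place, the conclusion is automatic from the stability of smoothness under base change.
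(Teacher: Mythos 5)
Your proposal is correct and follows exactly the paper's route: the paper likewise deduces the corollary immediately from Pappas--Rapoport's Theorem 4.1 via stability of smoothness under base change to the residue field $\kk$, together with the identification of the base-changed diagram \eqref{eq:three-spaces} with \eqref{eq:three-spaces-basechanged}. The unwinding of definitions you describe is precisely what the paper asserts is ``apparent from the definitions,'' so there is nothing missing.
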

\noindent In \S \ref{sec:smoothness-of-the-map-phi} we will explain briefly Pappas and Rapoport's proof of this corollary.

Of central importance to Pappas and Rapoport is whether the space $M$ is flat over $\cO_{E}$. When the $(r_\phi)_\phi$ above differ from each other by more than $1$, they prove that $M$ is not flat for dimension reasons. What remains are the cases where the $(r_\phi)_\phi$ differ from each other by at most one. By \cite[Theorem B (iii)]{Pappas-Rapoport}, flatness in these cases would follow from the reducedness of $\cN_{n,e}$. As a corollary of our work one obtains flatness of $M$ in these cases, and therefore, in the terminology of \cite{Pappas-Rapoport}, that the na\"ive local model coincides with the local model.

\subsection{Our approach}

Our solution follows naturally by building on the ideas of our previous work \cite{Muthiah-Weekes-Yacobi} and earlier work by the first two authors and Kamnitzer \cite{Kamnitzer-Muthiah-Weekes}. One of our stated goals in \cite[\S 1.3.2]{Muthiah-Weekes-Yacobi} is to use the framework developed there to understand the scheme structure of nilpotent orbit closures in positive characteristic. The present work is our first success in this program. 

In \cite{Muthiah-Weekes-Yacobi}, we proved a conjecture of Kreiman, Lakshmibai, Magyar, and Weyman about the equations defining affine Grassmannians for $\SL_n$. Their conjecture is that the defining equations are given by certain explicit \emph{shuffle operators}. 

Our study led us to consider the general situation of a nilpotent operator $T$ on a vector space $V$. We consider the Grassmannian $\Gr(n,V)$, and define a subscheme $\cS^T$ by explicit $T$-\emph{shuffle operators} generalizing the shuffle operators. Our first result (Theorem \ref{thm:primeST-eq-cST}) is that $\leftexp{'}{\cS^T} = \cS^T$, where $\leftexp{'}{\cS^T}$ is the scheme appearing in the work of Pappas and Rapoport as explained above. This provides the link between our previous work and the work of Pappas and Rapoport. 

Theorem \ref{thm:primeST-eq-cST} should be of general interest because $\leftexp{'}{\cS^T}$ is given as an explicit moduli functor, while $\cS^{T}$ is given by explicit homogeneous equations in the Pl\"ucker embedding of the Grassmannian. 

In \cite{Kamnitzer-Muthiah-Weekes}, we developed some techniques for attacking a conjecture about the reduced scheme structure on Schubert varieties in the affine Grassmannian. We use this to prove that a certain family of $\cS^T$ are reduced (Proposition \ref{prop:identifying-first-schuberts}). In particular, we show that these schemes are Schubert varieties in the affine Grassmannian with their reduced scheme structure. Using the fact that Schubert varieties are Frobenius split compatibly with their Schubert subvarieties, we can take intersections and conclude that a larger family of $\cS^T$ are also reduced (Proposition \ref{prop:intersecting-schuberts}).   It follows that all corresponding $\widetilde{\cS^T}$ are reduced, since $\widetilde{\cS^T} \rightarrow \cS^T$ is a $\GL_n$--torsor.

We note that this strategy is similar to the proof of \cite[Theorem 4.5.1]{Gortz-2001}, where the special fiber $\overline{M}^{\text{loc}}$ of a local model is expressed as an intersection of smooth Schubert varieties.  One additional difficulty for us is that we consider intersections of singular Schubert varieties.

Finally, we show that for each $n$ and $e$ that there is some $\cS^T$ (which we have shown is reduced) such that the corresponding map $\widetilde{\cS^T} \rightarrow \cN_{n,e}$ is surjective and smooth. Surjectivity is a direct calculation (Proposition \ref{prop:surjectivity}). Smoothness is due to Pappas and Rapoport (Theorem \ref{thm:pappas-rapoport-smoothness}). They work in a more complicated situation than we consider, so for the benefit of the reader we explain in \S \ref{sec:smoothness-of-the-map-phi} how their proof works in our simpler setting. Therefore, we conclude that $\cN_{n,e}$ is reduced.

After analyzing our argument, we found a simplified but less conceptual version that is almost completely elementary. The only non-elementary ingredient is the Frobenius splitting of Schubert varieties. We present this simplified argument in \S \ref{sec:another-proof}.

\subsection{Acknowledgements}
We would like to thank an anonymous referee of \cite{Muthiah-Weekes-Yacobi} for suggesting the connection between our work and that of Pappas and Rapoport, and Geordie Williamson and Xinwen Zhu for their comments on an earlier version of this manuscript.  We are also grateful to the Centre de Recherches Math\'ematiques and to Joel Kamnitzer and Hugh Thomas for organizing the program ``Quiver Varieties and Representation Theory'' where this project was started. 

D.M. was supported by JSPS KAKENHI Grant Number JP19K14495. A.W. was supported in part by the Perimeter Institute for Theoretical Physics. Research at Perimeter Institute is supported by the Government of Canada through Innovation, Science and Economic Development Canada and by the Province of Ontario through the Ministry of Research, Innovation and Science. O.Y. is supported by the ARC grant DP180102563.

\section{Preliminaries}

\subsection{Grassmannians and associated schemes}
\label{sec:grassmannians-and-associated-schemes}
Let $\kk$ be a field of arbitrary characteristic. Because our goal is to prove reducedness of a scheme, we may assume $\kk$ is algebraically closed. Let $V$ be a finite-dimensional vector space over $\kk$, and let $n$ be a non-negative integer. Write $\Gr(n,V)$ for the Grassmannian of $n$-planes in $V$. Let $\underline{V}$ denote the trivial bundle on $\Gr(n,V)$, and let $\cT$ denote the tautological rank-$n$ vector subbundle of $\underline{V}$. Write $\cO(-1) = \Wedge^n \cT$,  which is a line subbundle of the trivial bundle $\underline{\Wedge^n V}$. This defines the Pl\"ucker embedding $\Gr(n,V) \hookrightarrow \PP(\Wedge^n V)$.

Further, let us assume that $T : V \rightarrow V$ is a nilpotent operator. We denote by $\cG^T(n,V)$ the subscheme of $\Gr(n,V)$ consisting of $T$-invariant subspaces (see e.g.~\cite[\S 5.4]{Muthiah-Weekes-Yacobi}). When $n$ and $V$ are clear from context, we will simply write $\cG^T = \cG^T(n,V)$.

For any $T$-invariant $n$-plane $U$, we can consider the characteristic polynomial $\det(\lambda - T\vert_{U})$. This defines a regular function $\chi: \cG^T \rightarrow \kk[\lambda]$. Notice that set theoretically $\chi$ is equal to $\lambda^n$, but this is not necessarily true scheme-theoretically. In particular, the non-leading coefficients of $\chi$ are all nilpotent elements of $\kk[\cG^T]$. We define $\leftexp{'}{\cS^T}$ to be the closed subscheme of $\cG^T$ defined by the equation $\chi = \lambda^n$.

In \cite[\S 6.1]{Muthiah-Weekes-Yacobi}, we define for each $d \geq 1$ \emph{shuffle operators}, which are linear operators $\sh^T_d : \Wedge^n V \rightarrow \Wedge^n V$. For each $\sh^T_d$ we can consider the vanishing locus, denoted $\VV(\sh_d^T)$, which is the subvariety of $\PP(\Wedge^n V)$ given by the projectivization of the kernel of $\sh_d^T$. By composing with all linear functionals on $\Wedge^n V$, the shuffle operators define a subspace of linear forms on $\PP(\Wedge^n V)$. We will refer to the homogeneous ideal generated by these linear forms as the \emph{shuffle ideal}.

We define $\cS^T = \cS^T(n,V)$ to be the closed subscheme of $\Gr(n,V)$ defined by the shuffle ideal, i.e.~we define $\cS^T = \bigcap_{d \geq 1} \VV(\sh_d^T) \cap \Gr(n,V)$.  In our previous work, we proved the following.
\begin{Theorem}{\cite[Theorem 6.8]{Muthiah-Weekes-Yacobi}}
\label{thm: MWY inclusion}
The subscheme $\cS^T$ is a closed subscheme of $\cG^T$. This closed embedding induces an isomorphism of reduced schemes.
\end{Theorem}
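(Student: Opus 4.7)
The strategy is to split the theorem into the two assertions it contains: the scheme-theoretic inclusion $\cS^T \hookrightarrow \cG^T$ inside $\Gr(n,V)$, and the equality of the underlying $\kk$-points, which together give the claimed isomorphism of reduced schemes. Since both $\cS^T$ and $\cG^T$ are cut out by explicit ideals in $\Gr(n,V)$, the first assertion is the algebraic statement that the shuffle ideal contains the ideal of $T$-invariance of the tautological subbundle, while the second is the set-theoretic statement that every $T$-invariant $n$-plane satisfies the shuffle equations.

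For the scheme-theoretic inclusion, I would work with the tautological exact sequence $0 \to \cT \to \underline{V} \to \underline{V}/\cT \to 0$ on $\Gr(n,V)$. The defining ideal $\cI_{\cG^T}$ is generated by the entries of the composition $\cT \hookrightarrow \underline{V} \xrightarrow{T} \underline{V} \twoheadrightarrow \underline{V}/\cT$. The plan is to show that the shuffle operators, applied to $\Wedge^n \cT$ in a local Pl\"ucker trivialization, expand into sums whose coefficients involve precisely these matrix entries, so that the vanishing of the $\sh_d^T$ on the tautological line forces $T(\cT) \subseteq \cT$. This amounts to writing $\sh_d^T$ in local coordinates on an affine chart of $\Gr(n,V)$ and isolating the terms corresponding to how $T$ moves basis vectors of $\cT$ out of $\cT$.

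For the equality of $\kk$-points, the containment $\cS^T \subseteq \cG^T$ established above already gives one direction. For the reverse, I would fix a $T$-invariant $n$-plane $U$ and verify $\sh_d^T(\Wedge^n U) = 0$ for every $d \geq 1$. Since $T|_U$ is nilpotent, I would choose a basis of $U$ adapted to the Jordan decomposition of $T|_U$ and use the explicit formula for $\sh_d^T$ as a signed sum over shuffles to produce a direct combinatorial cancellation; on a single cyclic $\kk[T]$-summand of $U$ the cancellation is essentially an antisymmetrization against repeated factors, and the general case follows by multilinearity.

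The main obstacle will be the first step: carefully pairing each shuffle operator with the corresponding entry of $T : \cT \to \underline{V}/\cT$ at the level of ideals, rather than merely at the level of $\kk$-points. The shuffle operators are defined combinatorially on all of $\Wedge^n V$, and one must translate their action into Pl\"ucker coordinates on $\Gr(n,V)$ with enough precision to identify the generators of $\cI_{\cG^T}$ inside the shuffle ideal. Once this combinatorial dictionary between shuffles and matrix entries is in hand, both inclusions follow with routine bookkeeping.
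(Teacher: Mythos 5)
This theorem is not proved in the paper: it is imported verbatim from \cite[Theorem 6.8]{Muthiah-Weekes-Yacobi}, so there is no in-paper argument to compare yours against, and I will judge the proposal on its own terms. Your decomposition into (i) containment of ideals, giving the closed embedding $\cS^T \hookrightarrow \cG^T$, and (ii) equality of $\kk$-points, giving the isomorphism of reductions, is the right one (with $\kk$ algebraically closed and everything of finite type, a closed embedding bijective on closed points induces an isomorphism of reduced schemes). Both halves of your sketch are viable, but you stop exactly at the step you yourself flag as the hard one, and the bookkeeping is not quite as you describe. For (i): in the chart where $\cT$ has frame $u_i = e_i + \sum_{j\notin I} x_{ji}e_j$, writing $Tu_i = \sum_k c_{ki}u_k + r_i$ with $r_i$ in the complementary coordinate subspace, one gets $\sh_1^T(\omega) = (\sum_i c_{ii})\,\omega + \sum_i u_1\wedge\cdots\wedge r_i\wedge\cdots\wedge u_n$, and the coefficient of $e_{I\setminus\{i_0\}\cup\{j_0\}}$ is $\pm (r_{i_0})_{j_0} \pm (\sum_i c_{ii})\,x_{j_0 i_0}$; so the generators of the $T$-invariance ideal do not appear on the nose, and you must first extract the trace relation from the coefficient of $e_I$ to kill the cross-term. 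Once you do, the containment follows, in fact from $\sh_1^T$ alone. For (ii): on a cyclic summand $\langle v, Tv,\dots,T^{m-1}v\rangle$, the repeated-factor argument only kills the shuffles whose index set is not upward-closed; the surviving terms contain the factor $T^{m}v$ and vanish only because $T|_U$ is nilpotent. So the mechanism is repeated factors \emph{plus} nilpotence, and the reduction to cyclic summands uses the Leibniz rule $\sh_d^T(\omega_1\wedge\omega_2) = \sum_{d_1+d_2=d}\sh_{d_1}^T(\omega_1)\wedge\sh_{d_2}^T(\omega_2)$ rather than mere multilinearity.

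The tool missing from your outline, and the one that makes both halves short, is the generating identity $\Wedge^n(I+zT) = I + \sum_{d\geq 1} z^d \sh_d^T$, i.e.\ equation \eqref{eq:wedge-formula-for-shuffle-operators} (from \cite[Lemma 6.5]{Muthiah-Weekes-Yacobi}), which this paper already uses in the proof of Theorem \ref{thm:primeST-eq-cST}. Given it, the vanishing of all $\sh_d^T$ on a frame $\omega = u_1\wedge\cdots\wedge u_n$ says $\bigwedge_i (I+zT)u_i = \omega$; applying $\Wedge^{n+1}(I+zT)$ to $u_j\wedge\omega = 0$ yields $(I+zT)u_j\wedge\omega = 0$, whose $z$-coefficient is $Tu_j\wedge\omega = 0$, which is exactly the scheme-theoretic containment in $\cG^T$. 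Conversely, for a $T$-invariant $n$-plane $U$ over $\kk$ one has $\Wedge^n(I+zT)\omega = \det(I+zT|_U)\,\omega = \omega$ since $T|_U$ is nilpotent, giving (ii) with no combinatorics. I would encourage you to rebuild the argument around this identity.
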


\begin{Remark}
In fact we conjecture (\cite[Conjecture 7.7]{Muthiah-Weekes-Yacobi}) that $\cS^T$ is reduced and is therefore equal to the reduced scheme of $\cG^T$.
\end{Remark}

The \emph{Stiefel variety} $\tilde{\Gr(n,V)}$ is the open subscheme of $\Hom_{\kk}(\kk^n, V)$ consisting of rank--$n$ linear transformations. We have a natural map $\tilde{\Gr(n,V)} \rightarrow \Gr(n,V)$, which is a $\GL_{n}$--torsor. For any closed subscheme $ X \hookrightarrow \Gr(n,V)$, we define $\tilde{X}$ to be the preimage of $X$ inside the Stiefel variety. For example, we define $\tilde{\cS^T}$ this way.

\subsection{Alternative description of $\cS^T$}
\label{sec:alternative-description-of-S-T}

\begin{Theorem}
  \label{thm:primeST-eq-cST}
  We have an equality 
  $ \cS^T = \primeST $
  as subschemes of $\Gr(n,V)$.
\end{Theorem}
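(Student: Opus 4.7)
By Theorem~\ref{thm: MWY inclusion} the subscheme $\cS^T$ of $\Gr(n,V)$ factors through $\cG^T$, and $\primeST$ is by construction a closed subscheme of $\cG^T$. The plan is to show that $\cS^T$ and $\primeST$ coincide by exhibiting, inside $\cG^T$, the same generators for the two defining ideals: namely the non-leading coefficients $c_1, \ldots, c_n \in \Gamma(\cG^T, \cO_{\cG^T})$ of the universal characteristic polynomial $\chi(U) = \det(\lambda - T|_U)$. For $\primeST$ this is a tautology, since the equation $\chi = \lambda^n$ is literally $c_1 = \cdots = c_n = 0$; all the work is on the shuffle side.

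The crux of the argument is the following identity on the tautological line $\cO(-1) = \Wedge^n \cT$ restricted to $\cG^T$: for every $T$-invariant $n$-plane $U$ and every $u \in \Wedge^n U$,
\begin{equation}\label{eq:key-identity-plan}
\sh_d^T(u) \ = \ (-1)^d e_d(T|_U) \cdot u \ = \ \pm c_d(U) \cdot u,
\end{equation}
where $e_d$ denotes the $d$-th elementary symmetric function of the eigenvalues of $T|_U$. I would establish \eqref{eq:key-identity-plan} by a direct computation: after choosing an ordered basis $v_1, \ldots, v_n$ of $U$ in which $T|_U$ is upper-triangular with diagonal entries $\mu_1, \ldots, \mu_n$, setting $u = v_1 \wedge \cdots \wedge v_n$, and expanding $\sh_d^T(u)$ via the combinatorial definition from \cite[\S 6.1]{Muthiah-Weekes-Yacobi}, every strictly-upper-triangular contribution cancels by antisymmetry of the wedge, leaving only $\sum_{|S|=d} \prod_{i \in S} \mu_i \cdot u$. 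For $d = 1$ this specializes to the classical identity $\sh_1^T(u) = \trace(T|_U) \cdot u$ for the derivation $\sh_1^T = D_T$.

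Granting \eqref{eq:key-identity-plan}, the map $\sh_d^T|_{\cO(-1)} : \cO(-1) \to \underline{\Wedge^n V}$ restricted to $\cG^T$ factors through the subbundle $\cO(-1)$ and acts on it as multiplication by $\pm c_d$. Since $\cO(-1) \hookrightarrow \underline{\Wedge^n V}$ is a locally split subbundle inclusion, the zero scheme of this section on $\cG^T$ coincides with the zero scheme of $c_d$ regarded as a regular function on $\cG^T$. Shuffle operators with $d > n$ either vanish identically on $\cG^T$ or reduce to lower-degree ones via Cayley-Hamilton, contributing no new equations. Therefore $\cS^T$ and $\primeST$ are cut out in $\cG^T$ by the same $n$ functions, proving equality of subschemes.

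The main obstacle I anticipate is promoting \eqref{eq:key-identity-plan} from a pointwise identity to a scheme-theoretic one, since $\cG^T$ may be non-reduced and pointwise agreement at closed points is not a priori enough. This is handled by performing the computation in universal coordinates on an affine chart of $\Gr(n,V)$ (for instance, the big cell), where both sides of \eqref{eq:key-identity-plan} become explicit polynomials in the matrix entries of a local parametrization of the tautological bundle, and the identity holds as an equation in the coordinate ring itself rather than merely at closed points.
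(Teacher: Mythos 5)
Your proposal is correct and follows essentially the same route as the paper: the paper packages your key identity \eqref{eq:key-identity-plan} as the generating-function formula $\Wedge^n(I+zT) = I + \sum_{d=1}^n z^d \sh_d^T$ (quoted from \cite[Lemma 6.5]{Muthiah-Weekes-Yacobi} rather than re-derived), restricts it to the line subbundle $\cO(-1)$ over $\cG^T$, and reads off that the shuffle ideal and the characteristic-polynomial ideal have the same generators. Your concern about promoting the identity from closed points to a scheme-theoretic statement is handled in the paper exactly as you suggest, by working with bundle maps $\cO(-1)\to\cO(-1)$ over $\cG^T$ rather than pointwise (and note that for $d>n$ the operators $\sh_d^T$ vanish outright on $\Wedge^n V$, so no Cayley--Hamilton reduction is needed).
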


\begin{proof}
  
Recall that $\cS^T$ is defined by intersecting the Grassmannian $\Gr(n,V)$ with the vanishing locus of shuffle operators $sh_{d}^T: \Wedge^n V \rightarrow \Wedge^n V$. Consider these  as operators on the trivial bundle $sh_d^T : \underline{\Wedge^n V} \rightarrow \underline{\Wedge^n V}$, which we can restrict to get morphisms:
\begin{align}
  \label{eq:shuffles-as-map-of-bundles}
sh_d^T : \cO(-1) \rightarrow \underline{\Wedge^n V}
\end{align}
One can alternately define $\cS^T$ to be the intersection of the vanishing loci of \eqref{eq:shuffles-as-map-of-bundles}.
Observe by Theorem \ref{thm: MWY inclusion} that over $\cG^T$, the maps $sh_d^T : \cO(-1) \rightarrow \underline{\Wedge^n V}$ factor through maps $sh_d^T: \cO(-1) \rightarrow \cO(-1)$.

Consider the $\kk[z]$-linear operator $I + zT$ on $V \otimes \kk[z]$, and consider its $n$-th wedge power  $\Wedge^n(1+zT)$ (taken over $\kk[z]$), which acts on $\Wedge^n(V) \otimes \kk[z]$.
By \cite[Lemma 6.5]{Muthiah-Weekes-Yacobi}, we have the following equality 
\begin{align}
  \label{eq:wedge-formula-for-shuffle-operators}
  \Wedge^n(I + zT) = I + \sum_{d=1}^n z^d sh_d^T
\end{align}
of operators on $\Wedge^n(V) \otimes \kk[z]$.

Write $\AA^1 = \Spec \kk[z]$. We interpret \eqref{eq:wedge-formula-for-shuffle-operators} as an equality of endomorphisms of $\underline{\Wedge^n V} \otimes \kk[z]$, the trivial bundle on $\Gr(n,V) \times \AA^1$ with fiber $\Wedge^n V$.  If we restrict to $\cG^T \times \AA^1$, we obtain a map:
\begin{align}
  \label{eq:wedge-map-on-GT}
I + \sum_{d=1}^n z^d sh_d^T : \cO(-1)\otimes \kk[z] \rightarrow \cO(-1)\otimes \kk[z]
\end{align}
Because $\cO(-1)$ is a line bundle, the map \eqref{eq:wedge-map-on-GT} is equivalent to the data of a regular function on $\cG^T$ valued in $\kk[z]$. By \eqref{eq:wedge-formula-for-shuffle-operators}, this map is given by the 
$z^n \chi(-z^{-1})$, where $\chi: \cG^T \rightarrow \kk[z]$ is the characteristic polynomial map defined above.
\end{proof}

\subsection{Smoothness of the map $\phi: \tilde{\cS^T} \rightarrow \cN_{n,e}$}
\label{sec:smoothness-of-the-map-phi}

We have a  map $ \phi : \tilde{\cS^T} \rightarrow \Mat_{n \times n}$ defined as follows.
For a test ring $R$, we have $\tilde{\cS^T}(R) = \left\{ (U,\psi) \suchthat U \in \cS^T(R) \textand \psi : R^n \overset{\sim}{\rightarrow} U  \right\}$. The map $\phi$ sends a pair $(U,\psi)$ to the $n \times n$ matrix $\psi^{-1} \circ T \circ \psi$.

Because of the characteristic polynomial equation defining $\cS^T$, the image of $\phi$ lies in the nilpotent cone $\cN$ (even scheme theoretically). In general this map is poorly behaved, but in the special case we consider below it is smooth.

Let $d,e \geq 1$ be integers, let $V$ be a vector space of dimension $de$ and suppose $T$ is the nilpotent endomorphism of $V$ that is given by a standard Jordan form corresponding to the partition $(e^d)$. Let $\cS^T = \cS^T(n,V)$ for some $n \leq de$. In this case, the morphism $\phi$ factors through a map $\phi: \tilde{\cS^T} \rightarrow \cN_{n,e}$.

\begin{Theorem}{\cite[Theorem 4.1]{Pappas-Rapoport}}
  \label{thm:pappas-rapoport-smoothness}
 The morphism $\phi: \tilde{\cS^T} \rightarrow \cN_{n,e}$ is smooth of relative dimension $nd$.
\end{Theorem}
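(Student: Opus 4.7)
I will prove smoothness by exhibiting $\tilde{\cS^T}$ as an open subscheme of a trivial affine bundle of rank $nd$ over $\cN_{n,e}$, exploiting the special Jordan structure of $(V,T)$. First I would identify $\tilde{\cS^T}$ with the open locus where $\psi$ is injective inside the closed subscheme
\begin{align*}
Z = \{(A, \psi) \in \cN_{n,e} \times \Hom_\kk(\kk^n, V) \suchthat T\psi = \psi A\} \subseteq \cN_{n,e} \times \Hom_\kk(\kk^n, V).
\end{align*}
For $\psi$ injective, $T$-invariance of $\psi(\kk^n)$ is equivalent to the existence of a (necessarily unique) $A$ with $T\psi = \psi A$, and the characteristic polynomial condition $\det(\lambda - T|_{\psi(\kk^n)}) = \lambda^n$ defining $\cS^T$ translates into the defining equations of $\cN_{n,e}$ for $A$.

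Next, using the hypothesis that $T$ has Jordan type $(e^d)$, I would fix an identification $V \cong \kk^d \otimes \kk[x]/(x^e)$ under which $T$ becomes multiplication by $x$. For a linear map $\psi : \kk^n \to V$, expand $\psi(v) = \sum_{k=0}^{e-1} p_k(v) \otimes x^k$ with $p_k \in \Hom_\kk(\kk^n, \kk^d)$. The equation $T\psi = \psi A$ then translates into the relations $p_{k-1} = p_k \circ A$ for $k = 1, \ldots, e-1$ together with $p_0 \circ A = 0$. Setting $\alpha := p_{e-1}$, the recursive relations uniquely determine $p_k = \alpha \circ A^{e-1-k}$ for all $k$, while the remaining equation $p_0 \circ A = \alpha \circ A^e = 0$ becomes automatic on $\cN_{n,e}$ thanks to the defining relation $A^e = 0$.

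Globalizing this calculation gives an explicit isomorphism of schemes over $\cN_{n,e}$,
\begin{align*}
\cN_{n,e} \times \Hom_\kk(\kk^n, \kk^d) \xrightarrow{\sim} Z,
\end{align*}
sending $(A, \alpha)$ to $(A, \psi_\alpha)$ with $\psi_\alpha(v) = \sum_{k=0}^{e-1} \alpha(A^{e-1-k} v) \otimes x^k$, with inverse given by extracting the $x^{e-1}$-coefficient of $\psi$. Restricting to the open locus where $\psi$ is injective thus realizes $\tilde{\cS^T}$ as an open subscheme of the trivial bundle $\cN_{n,e} \times \AA^{nd}$, from which smoothness of $\phi$ with relative dimension $nd$ is immediate. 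The main point to verify carefully will be that the scheme-theoretic equations cut out by $T\psi = \psi A$ really are generated by the relations $p_k = \alpha \circ A^{e-1-k}$ (with $\alpha = p_{e-1}$) — that is, that the parametrization is an isomorphism of coordinate rings — which reduces to a direct linear-algebraic computation that uses the defining relation $A^e = 0$ of $\cN_{n,e}$ at exactly one place, namely to make the equation $p_0 \circ A = 0$ redundant.
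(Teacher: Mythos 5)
Your argument is essentially the paper's own proof (which itself follows Pappas--Rapoport): your scheme $Z$ is exactly the paper's $\cV$, since $T\psi = \psi A$ is precisely $S$-linearity of $\psi$ for $S = R[t]/t^e$, and your explicit elimination $p_k = \alpha \circ A^{e-1-k}$ is the concrete form of the paper's assertion that $\cV \to \cN_{n,e}$ is a trivial rank-$nd$ vector bundle because $V \otimes R$ is free over $S$ when $T$ has Jordan type $(e^d)$. The proposal is correct, and usefully spells out the linear algebra the paper only sketches.
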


The proof of \cite[Theorem 4.1]{Pappas-Rapoport} holds in a more complicated arithmetic situation, and it simplifies greatly in the present one. For the benefit of the reader, we will briefly explain how their proof works in our situation.

Define a closed subscheme $\cV$ of $\Mat_{n,n} \times \Hom_{\kk}(\kk^n, V)$ as follows. For any test ring $R$, consider the ring $S = R[t]/t^e$. The operator $T$ defines an $S$--module structure on $V\otimes R$. Additionally, any point $A \in \cN_{n,e}(R)$ defines an $S$--module structure on $R^n$, which we denote by  $R^n_{A}$. We define $\cV$ by:
\begin{align}
  \cV(R) = \left\{ (A,\phi) \suchthat A \in \cN_{n,e}(R) \textand \phi \in \Hom_{S}(R^n_A, V \otimes R) \right\}
\end{align}
It is clear that $\tilde{\cS^{T}}$ admits an open embedding into $\cV$ with image equal to the set of pairs $(A,\phi)$ where $\phi$ is of maximal rank.

We also have a map $\cV \rightarrow \cN_{n,e}$, but for general $T$ this map will not be well behaved. However,  our assumptions on the Jordan type of $T$ imply that $V \otimes R$ is a \emph{free} $S$-module of rank $d$. Using this, one concludes that the map $\cV \rightarrow \cN_{n,e}$ is a (trivial) vector bundle of rank $nd$. Therefore, the map $\tilde{\cS^{T}} \rightarrow \cN_{n,e}$ is smooth because it factors as an open embedding into a vector bundle followed by projection from the vector bundle to its base.

\begin{Remark}
For a general $T$ with $T^e = 0$, the fibers of $\cV \rightarrow \cN_{n,e}$ are closely related to the work of Shayman \cite[Section 5]{Shayman}, who studied certain locally closed subvarieties of the varieties $\cS^T$.
\end{Remark}

\subsection{Type $A$ affine Grassmannians}
\label{sec:type-A-affine-Grassmannians}

Let $n \geq 2$. Recall the \emph{$\GL_n$ affine Grassmannian} $\affGr_{\GL_n}$ parameterizing lattices $L \subset \kk((t))^n$ such  $ t^{a} L_0 \subseteq L \subseteq t^{-b} L_0$ for some integers $a,b \geq 0$ where $L_0 = \kk[[t]]^n$ (see e.g. \cite[Section 1.1]{Zhu-2017}).  We further require that $\dim_\kk (L \cap L_0 )/L_0 - \dim_\kk( L \cap L_0) / L = 0$. This condition means that  we are only considering the connected component of the full affine Grassmannian of $\GL_n$ containing the point $L_0$. We will not use the other connected components, so our notation should cause no confusion.

For each pair of integers $a,b \geq 0$, we can consider the linear operator $T_{a,b}$ on $t^{-b} L_0 / t^{a} L_0$ given by multiplication by $t$. We therefore obtain a closed embedding $\cG^{T_{a,b}} = \cG^{T_{a,b}}(na,t^{-b} L_0 / t^{a} L_0) \hookrightarrow \affGr_{\GL_n}$. Explicitly, we identify $\cG^{T_{a,b}}$ with the subscheme of $\affGr_{\GL_n}$ parameterizing lattices $L \subset \kk((t))^n$ such that 
$t^{a}L_0 \subset L \subset t^{-b}L_0$ and $ \dim\big(L / t^{a} L_0\big)=na$.
Taking direct limits we have
\begin{align}
  \label{eq:ind-scheme-presentation-of-Gr-GL-n}
  \affGr_{\GL_n} = \lim_{a,b \rightarrow \infty} \cG^{T_{a,b}}
\end{align}
This realizes $\affGr_{\GL_n}$ as an ind-scheme of ind-finite type.
We define the affine Grassmannian $\affGr_{\SL_n}$ for $\SL_n$ to be the induced reduced structure of $\affGr_{\GL_n}$. Explicitly:
\begin{align}
  \label{eq:ind-scheme-presentation-of-Gr-SL-n}
 \affGr_{\SL_n} =  \lim_{a,b \rightarrow \infty} (\cG^{T_{a,b}})^{red} 
\end{align}

We may also consider $ \cS^{T_{a,b}} = \cS^{T_{a,b}}(na,t^{-b} L_0 / t^{a} L_0) $. A priori, $\cS^{T_{a,b}}$ is a subscheme of $\affGr_{\GL_n}$, but below (Corollary \ref{cor:ST-ab-is-reduced}) we will show that $\cS^{T_{a,b}}$ is in fact equal to the reduced scheme $(\cG^{T_{a,b}})^{red}$ and is therefore a subscheme of $\affGr_{\SL_n}$.

\subsubsection{Big cells}
\label{sec:big-cells}

Consider the decomposition $\kk((t))^n = L_0 \oplus t^{-1}\kk[t^{-1}]^n$ and the induced projection map $\kk((t))^n \rightarrow L_0$. The \emph{big cell} of $\affGr_{\GL_n}$ is the open locus consisting of lattices $L$ such that the restricted projection map $L \rightarrow L_0$ is an isomorphism. As is well known (see e.g.~\cite[Lemma 2]{Faltings}), the big cell is isomorphic to the ind-scheme $\GL_n^{(1)}[t^{-1}]$ whose $R$-points are given by $\{ A(t^{-1}) \in  \GL_n(R[t^{-1}]) \suchthat A(t^{-1}) \equiv 1 \mod t^{-1} \}$. The open immersion $\GL_n^{(1)}[t^{-1}] \hookrightarrow \affGr_{\GL_n}$ is given by sending a matrix polynomial $A(t^{-1})$ to the lattice spanned by its columns. We will use this identification and simply write $\GL_n^{(1)}[t^{-1}]$ for the big cell.

We can intersect the big cell of $\affGr_{\GL_n}$ with $\affGr_{\SL_n}$ to obtain the big cell of the latter. As above, we can identify the big cell of $\affGr_{\SL_n}$ with $\SL_n^{(1)}[t^{-1}]$, which is analogously defined.

Observe that the group $\SL_n[[t]]$ acts on $\affGr_{\GL_n}$ by left multiplication. We note that every orbit meets the big cell as we consider only the $L_0$ connected component of the full affine Grassmannian of $\GL_n$.

\subsubsection{Schubert varieties}
\label{sec:schubert-varieties}

Recall that the $SL_n[[t]]$ orbits on $\affGr_{\SL_n}$ are indexed by dominant cocharacters. For a dominant cocharacter $\mu$, let $\overline{\affGr^{\mu}}$ be the corresponding orbit closure in $\affGr_{\SL_n}$. By definition $\overline{\affGr^{\mu}}$ has reduced scheme structure.  Additionally, $\affGr_{\SL_n}$ is isomorphic to a partial flag variety for the Kac-Moody group $\widehat{\SL_n}$, and the subvarieties $\overline{\affGr^{\mu}}$ are Schubert subvarieties. In particular, each $\overline{\affGr^{\mu}}$ admits a Frobenius splitting compatible with all of its Schubert subvarieties (\cite[Ch. VIII]{Mathieu}, see also \cite[\S 4]{Faltings} and \cite[Chapter 2]{Brion-Kumar}).
Therefore, the scheme theoretic intersection of two Schubert varieties is Frobenius split and hence reduced. This result is the only consequence of Frobenius splitting we will use. 

The Schubert varieties relevant to us are $\overline{ \affGr^{p n \varpi_1}}$ and $\overline{ \affGr^{q n \varpi_{n-1}}}$, where $p,q \geq 1$ are integers. Here $n \varpi_1$ and $n \varpi_{n-1}$ are $n$ times the first and last fundamental coweight, which are the minimal multiples that lie in the cocharacter (equivalently, coroot) lattice for $\SL_n$.
We have
\begin{align}
  \label{eq:1}
\overline{\affGr^{p n \varpi_1}} = (\cG^{T_{p(n-1),p}})^{red},   \qquad \overline{\affGr^{q n \varpi_{n-1}}} = (\cG^{T_{q,q(n-1)}})^{red}
\end{align}
which is an explicit set-theoretic description of these Schubert varieties in terms of lattices.
Because of Theorem \ref{thm: MWY inclusion}, one immediately obtains closed embeddings
\begin{equation}
\label{eq: embedding Schubert varieties into S^T}
  \overline{\affGr^{p n \varpi_1}} \hookrightarrow \cS^{T_{p(n-1),p}}, \qquad  \overline{\affGr^{q n \varpi_{n-1}}} \hookrightarrow \cS^{T_{q,q(n-1)}}
\end{equation}
that are bijective on points. Below (Proposition \ref{prop:identifying-first-schuberts}) we will show that these are actually isomorphisms of schemes. In particular, the na\"ive lattice description of these Schubert varieties is scheme-theoretically correct once we add a further condition about the characteristic polynomial of $t$ (as in the defintion of $\primeST$).

\subsubsection{Diagram automorphism}
\label{sec:diagram-automorphism}

The presentation \eqref{eq:ind-scheme-presentation-of-Gr-GL-n} gives rise to a closed embedding of $\affGr_{\GL_n}$ into the \emph{Sato Grassmannian} $\SGr = \lim_{a,b \rightarrow \infty} \Gr(na,t^{-b} L_0 / t^{a} L_0)$. Taking direct limits of the Pl\"ucker embeddings gives the Pl\"ucker embedding of $\SGr$ into $\PP(\cF)$, where $\cF$ is the Fermion Fock space. As is well known (see for example \cite[\S 4.1]{Muthiah-Weekes-Yacobi}), the Fermion Fock space can be identified with the vector space underlying the ring of symmetric functions.

Consider the involution of the ring of symmetric functions given by swapping elementary and homogeneous symmetric functions (denoted $\omega$ in \cite[Section I.2]{Macdonald}). This induces an involution $\omega$ of $\SGr$ that preserves $\affGr_{\GL_n}$ and $\affGr_{\SL_n}$.  On the level of Schubert varieties, $\omega$ maps $\overline{\affGr^\mu}$ isomorphically onto $\overline{\affGr^{\mu^*}}$ where $*$ is induced by the unique non-trivial diagram automorphism of $\SL_n$. In particular, $\omega$ maps $\overline{\affGr^{p n \varpi_1}}$ isomorphically onto $\overline{\affGr^{p n \varpi_{n-1}}}$. On the level of big cells $\GL_n^{(1)}[t^{-1}]$  and $\SL_n^{(1)}[t^{-1}]$, $\omega$ is given by the following map on matrix polynomials:
\begin{equation}
\label{eq: involution on big cell of GL_n}
A(t^{-1} ) \ \longmapsto \ J \cdot A \big( (-1)^n t^{-1} \big)^{-T} \cdot J^{-1}
\end{equation}
where $J $ is the $n\times n$ antidiagonal matrix with entry $(-1)^j$ in column $j$, and where the superscript $-T$ denotes the inverse transpose.

\section{The main proof}
\label{sec:the-main-proof}

\begin{Proposition}
  \label{prop:identifying-first-schuberts}
  For all $p,q \geq 1$, the embeddings $\overline{\affGr^{p n \varpi_1}} \hookrightarrow \cS^{T_{p(n-1),p}}$ and $\overline{\affGr^{q n \varpi_{n-1}}} \hookrightarrow \cS^{T_{q,q(n-1)}}$ from \eqref{eq: embedding Schubert varieties into S^T} are isomorphisms. In particular, $\cS^{T_{p(n-1),p}}$ and $\cS^{T_{q,q(n-1)}}$ are reduced.
\end{Proposition}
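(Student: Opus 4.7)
The two statements are related by the diagram automorphism $\omega$ of \S\ref{sec:diagram-automorphism}, so I focus on the first: showing that $\overline{\affGr^{pn\varpi_1}} \hookrightarrow \cS^{T_{p(n-1),p}}$ is an isomorphism. Writing $T = T_{p(n-1),p}$, by Theorem \ref{thm: MWY inclusion} together with \eqref{eq:1} this map is already a closed embedding inducing a bijection on points, so the content is to prove that $\cS^T$ is reduced.

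My plan is to check reducedness locally on the big cell. The $\SL_n[[t]]$-action on $\affGr_{\GL_n}$ commutes with multiplication by $t$ and hence preserves both $T$-invariance and the shuffle operators, so it acts on $\cS^T$. Since every $\SL_n[[t]]$-orbit in the relevant connected component meets the big cell $\GL_n^{(1)}[t^{-1}]$, reducedness of $\cS^T$ is equivalent to reducedness of $\cS^T \cap \GL_n^{(1)}[t^{-1}]$. Parametrizing the big cell by matrix polynomials $A(t^{-1}) \equiv I \bmod t^{-1}$, the containments $t^{p(n-1)}L_0 \subset L \subset t^{-p}L_0$ become explicit degree bounds on $A$ and $A^{-1}$, and via the identity \eqref{eq:wedge-formula-for-shuffle-operators} the characteristic polynomial equation defining $\primeST = \cS^T$ becomes an explicit polynomial identity in the entries of $A$.

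I then plan to match this explicit presentation with the big-cell presentation of $\overline{\affGr^{pn\varpi_1}}$ used in \cite{Kamnitzer-Muthiah-Weekes}, where reducedness of related affine schemes was established via Frobenius splitting combined with tangent-space bounds. The main obstacle will be matching the two ideals on the big cell: showing that the shuffle and characteristic polynomial equations generate the full radical ideal of the Schubert variety, rather than just cut it out set-theoretically. I would approach this by invoking the Frobenius splitting of $\overline{\affGr^{pn\varpi_1}}$, so that scheme-theoretic agreement reduces to a check at a single smooth point of the top stratum; a tangent-space count there, together with the known equality of underlying sets and the dimension of the Schubert variety, should then close the argument.
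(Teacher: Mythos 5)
Your reduction to the big cell via $\SL_n[[t]]$-invariance, and the idea of comparing the shuffle/characteristic-polynomial equations with the explicit matrix-polynomial presentation of $\overline{\affGr^{pn\varpi_1}} \cap \GL_n^{(1)}[t^{-1}]$ from \cite{Kamnitzer-Muthiah-Weekes}, both match the paper. The gap is in your final step. You propose to show that the shuffle ideal is the full radical ideal by ``invoking the Frobenius splitting of $\overline{\affGr^{pn\varpi_1}}$'' and then doing a tangent-space count at a single smooth point of the top stratum. Neither ingredient accomplishes this: the Frobenius splitting lives on the \emph{reduced} Schubert variety and gives no control over the a priori non-reduced scheme $\cS^{T_{p(n-1),p}}$, while a tangent-space computation at one point only establishes reducedness near that point and cannot exclude embedded or non-reduced structure along the boundary strata. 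Equality of underlying sets together with generic reducedness does not imply equality of schemes; one would additionally need, say, that $\cS^{T_{p(n-1),p}}$ has no embedded primes, which you do not address and which is essentially as hard as the statement itself.

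The paper closes the argument by a different mechanism, and the ingredient you are missing is \cite[Theorem 4.27]{Muthiah-Weekes-Yacobi}: the coefficients of $\det(A(t^{-1})) - 1$ lie in the shuffle ideal. Combined with the observation that $\cS^{T_{p(n-1),p}} \cap \GL_n^{(1)}[t^{-1}]$ lands in the matrix polynomials of degree at most $p$, this produces a closed embedding of $\cS^{T_{p(n-1),p}} \cap \GL_n^{(1)}[t^{-1}]$ into the scheme of $A(t^{-1}) = 1 + A_1 t^{-1} + \cdots + A_p t^{-p}$ with $\det A(t^{-1}) = 1$, which by \cite[Corollary 2]{Kamnitzer-Muthiah-Weekes} is exactly $\overline{\affGr^{pn\varpi_1}} \cap \GL_n^{(1)}[t^{-1}]$ as a scheme. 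Together with the closed embedding in the opposite direction coming from Theorem \ref{thm: MWY inclusion}, the two schemes are closed subschemes of one another and hence equal; no tangent-space count is needed. A smaller point: to transport the first isomorphism to the second via the diagram automorphism $\omega$, you must know that $\omega$ carries the shuffle ideal for $T_{p(n-1),p}$ to that for $T_{p,p(n-1)}$, not merely that it permutes the reduced Schubert varieties; the paper cites \cite[Corollary 4.13, Theorem 4.27]{Muthiah-Weekes-Yacobi} for precisely this invariance.
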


\begin{proof}

  First consider the embedding $\overline{\affGr^{p n \varpi_1}} \hookrightarrow \cS^{T_{p(n-1),p}}$.
 Both spaces are invariant under the action of $\SL_n[[t]]$, so it suffices to check that this map is an isomorphism on the big cell $\GL_n^{(1)}[t^{-1}] \subseteq \affGr_{GL_n}$.  Observe that $\cS^{T_{p(n-1),p}} \cap \GL_n^{(1)}[t^{-1}]$ maps into the closed subfunctor of $\GL_n^{(1)}[t^{-1}]$ consisting of matrix polynomials with degree less than or equal to $p$. In \cite[Corollary 2]{Kamnitzer-Muthiah-Weekes}  (see also the proof of \cite[Proposition 6.1]{Beauville-Laszlo}), we proved that $\overline{\affGr^{p n \varpi_1}} \cap \GL_n^{(1)}[t^{-1}]$ is equal as a scheme to the subscheme of matrix polynomials $A(t^{-1}) = 1 + A_1 t^{-1} + \cdots + A_p t^p$ such that $\det(A(t^{-1})) = 1$.  In \cite[Theorem 4.27]{Muthiah-Weekes-Yacobi} we showed that the coefficients of $\det(A(t^{-1}))$ lie in the shuffle ideal  defining $\cS^{T_{p(n-1),p}}$. Therefore the embedding is an isomorphism.

The second isomorphism follows by applying the diagram automorphism (\S \ref{sec:diagram-automorphism}) and observing that the shuffle ideal considered in \cite{Muthiah-Weekes-Yacobi} is invariant under it (see \cite[Corollary 4.13, Theorem 4.27]{Muthiah-Weekes-Yacobi}). %

\end{proof}

\begin{Remark}
  Let $L$ be an $R$-point of $\cS^{T_{p(n-1),p}} \cap \GL_n^{(1)}[t^{-1}]$. Then $L$ may be uniquely represented by a matrix polynomial $A(t^{-1}) \in \GL_n^{(1)}[t^{-1}]$. It is not difficult to show that there is an $R$-basis of $t^{-p} L_0 / L$ under which the matrix of $t$ is the companion matrix of the matrix polynomial $A(t^{-1}) = 1 + A_1 t^{-1} + \cdots + A_p t^{-p}$. In particular, the characteristic polynomial of this matrix is $\lambda^{pn} \cdot A(\lambda^{-1})$. We see that the characteristic polynomial equation defining $\cS^{T_{p(n-1),p}}$ (via Theorem \ref{thm:primeST-eq-cST}) corresponds exactly to the equation $\det(A(t^{-1})) = 1$ defining $\overline{\affGr^{p n \varpi_1}} \cap \GL_n^{(1)}[t^{-1}]$ inside matrix polynomials with degree less than or equal to $p$. This in particular gives another proof of \cite[Theorem 4.27]{Muthiah-Weekes-Yacobi}.
\end{Remark}

\begin{Proposition}
  \label{prop:intersecting-schuberts}
  Let $p,q \geq 1$, and let $M = \min \{ q, p(n-1) \}$ and $N = \min \{ p, q(n-1) \}$.
 Then $\cS^{T_{M,N}}$ is reduced.
\end{Proposition}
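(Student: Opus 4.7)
The plan is to realize $\cS^{T_{M,N}}$ as a scheme-theoretic intersection of the two reduced Schubert varieties from Proposition \ref{prop:identifying-first-schuberts}; reducedness will then follow from Frobenius splitting.

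First I would identify the relevant ambient $\cG^{T}$'s. Since $M \leq p(n-1)$, $M \leq q$, $N \leq p$ and $N \leq q(n-1)$, the four lattice conditions coming from $\cG^{T_{p(n-1),p}}$ and $\cG^{T_{q,q(n-1)}}$ collapse (using $t^a L_0 + t^{a'} L_0 = t^{\min(a,a')}L_0$ and $t^{-b}L_0 \cap t^{-b'}L_0 = t^{-\min(b,b')}L_0$) to the two conditions $t^M L_0 \subseteq L \subseteq t^{-N} L_0$. This yields a scheme-theoretic equality
\begin{align*}
\cG^{T_{M,N}} = \cG^{T_{p(n-1),p}} \cap \cG^{T_{q,q(n-1)}}
\end{align*}
inside $\affGr_{\GL_n}$.

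Next I would match the characteristic polynomial conditions by means of Theorem \ref{thm:primeST-eq-cST}. On $\cG^{T_{M,N}}$ the tautological bundle $L/t^{p(n-1)}L_0$ contains the trivial subbundle $t^M L_0 / t^{p(n-1)} L_0$ on which $t$ acts as a fixed nilpotent operator with characteristic polynomial $\lambda^{n(p(n-1)-M)}$. The characteristic polynomial therefore factors as
\begin{align*}
\det(\lambda - t|_{L/t^{p(n-1)}L_0}) \; = \; \lambda^{n(p(n-1)-M)} \cdot \det(\lambda - t|_{L/t^{M}L_0}),
\end{align*}
so the equation cutting out $\cS^{T_{p(n-1),p}}$ inside $\cG^{T_{p(n-1),p}}$ is equivalent, after restriction to $\cG^{T_{M,N}}$, to the equation cutting out $\cS^{T_{M,N}}$. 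Running the symmetric argument on the other factor produces a scheme-theoretic equality $\cS^{T_{M,N}} = \cS^{T_{p(n-1),p}} \cap \cS^{T_{q,q(n-1)}}$ inside $\affGr_{\GL_n}$.

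For the final step, Proposition \ref{prop:identifying-first-schuberts} identifies the two factors on the right with the reduced Schubert varieties $\overline{\affGr^{pn\varpi_1}}$ and $\overline{\affGr^{qn\varpi_{n-1}}}$ in $\affGr_{\SL_n}$. Choosing any dominant cocharacter $\nu$ with $pn\varpi_1, qn\varpi_{n-1} \leq \nu$, the Schubert variety $\overline{\affGr^\nu}$ carries a Frobenius splitting compatible with both subvarieties (\S\ref{sec:schubert-varieties}), so their scheme-theoretic intersection is Frobenius split and therefore reduced. The main obstacle is the second step: the shuffle ideals defining $\cS^{T_{M,N}}$ and the two larger $\cS^{T}$'s a priori live in different ambient vector spaces, and one must verify that they cut out the same closed subscheme of the common $\cG^{T_{M,N}}$. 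Routing through the characteristic polynomial reformulation of Theorem \ref{thm:primeST-eq-cST} and using the multiplicative factorization above is the cleanest way to handle this bookkeeping; once it is in place, the rest of the argument is immediate from results already in the paper.
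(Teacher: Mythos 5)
Your proposal is correct, and at the top level it follows the same strategy as the paper: identify $\cS^{T_{M,N}}$ with the scheme-theoretic intersection $\overline{\affGr^{pn\varpi_1}}\cap\overline{\affGr^{qn\varpi_{n-1}}}$ and invoke compatible Frobenius splitting. Where you differ is in how that identification is carried out. You prove the stronger statement that $\cS^{T_{M,N}} = \cS^{T_{p(n-1),p}} \cap \cS^{T_{q,q(n-1)}}$ as schemes, by first checking $\cG^{T_{M,N}} = \cG^{T_{p(n-1),p}} \cap \cG^{T_{q,q(n-1)}}$ (collapsing the lattice conditions) and then matching the defining ideals via Theorem \ref{thm:primeST-eq-cST} and the multiplicativity of characteristic polynomials along the filtration by the trivial subbundle $t^M L_0/t^{p(n-1)}L_0$; the point that multiplying the ideal's generators by $\lambda^{n(p(n-1)-M)}$ does not change the coefficient ideal makes this work. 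The paper instead sidesteps all of this bookkeeping: it only produces a closed embedding $\cS^{T_{M,N}} \hookrightarrow \overline{\affGr^{pn\varpi_1}}\cap\overline{\affGr^{qn\varpi_{n-1}}}$ via the universal property of the fiber product (using Proposition \ref{prop:identifying-first-schuberts} to identify three corners of the relevant square), and then observes that a closed embedding which is bijective on points onto a reduced scheme is an isomorphism. So the paper needs only one inclusion plus reducedness of the target, at the price of not recording the scheme-theoretic equality of intersections; your argument is more laborious but yields that equality as a byproduct, independent of reducedness. Both are valid; if you write yours up, the steps that need honest verification are the scheme-theoretic identity for the $\cG^T$'s (intersecting the conditions ``contains $t^aL_0$'' and ``is contained in $t^{-b}L_0$'' at the level of ideal sheaves on the ambient Grassmannian) and the characteristic-polynomial factorization for a $t$-stable short exact sequence of bundles.
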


\begin{proof}
  It is clear that $\cS^{T_{M,N}}$ is equal set theoretically to the intersection $\overline{\Gr^{pn\varpi_1^\vee}} \cap \overline{\Gr^{qn\varpi_{n-1}^\vee}}$. Furthermore, it is known that the scheme-theoretic intersection $\overline{\Gr^{pn\varpi_1^\vee}} \cap \overline{\Gr^{qn\varpi_{n-1}^\vee}}$ is reduced by Frobenius splitting (see \S \ref{sec:schubert-varieties}).

Denote $K = \max\{p, q(n-1)\}$.  Then we have a Cartesian diagram of closed embeddings
\begin{equation}
\begin{tikzcd}
  \overline{\Gr^{pn\varpi_1^\vee}} \cap \overline{\Gr^{qn\varpi_{n-1}^\vee}} \arrow[d, hook] \arrow[r, hook] & \overline{\Gr^{pn\varpi_1^\vee}} \arrow[d, hook] \\
  \overline{\Gr^{qn\varpi_{n-1}^\vee}} \arrow[r, hook] & \overline{\Gr^{K n\varpi_1^\vee}}
\end{tikzcd}
\end{equation}
and a diagram of closed embeddings
\begin{equation}
\begin{tikzcd}
  \cS^{T_{M,N}} \arrow[d, hook] \arrow[r, hook] & \cS^{T_{p(n-1),p}} \arrow[d, hook] \\
  \cS^{T_{q,q(n-1)}} \arrow[r, hook] & \cS^{T_{K(n-1),K}} 
\end{tikzcd}
\end{equation}
which is not a priori Cartesian.
By Proposition \ref{prop:identifying-first-schuberts}, the objects in these two diagrams agree in all but the northwest corner. By the universal property of the fiber product, we obtain a closed embedding $\cS^{T_{M,N}} \hookrightarrow \overline{\Gr^{pn\varpi_1^\vee}} \cap \overline{\Gr^{qn\varpi_{n-1}^\vee}} $. Because $\overline{\Gr^{pn\varpi_1^\vee}} \cap \overline{\Gr^{qn\varpi_{n-1}^\vee}}$ is reduced and this map is a bijection on points, it is an isomorphism.
\end{proof}

\begin{Remark}
\label{rmk:intersecting-schuberts}
The intersection $\overline{\affGr^{p n \varpi_1}} \cap \overline{\affGr^{q n \varpi_{n-1}}}$ is irreducible and is therefore also a Schubert variety for $\affGr_{\SL_n}$ (see \cite[Proposition 5.4]{Kamnitzer-Muthiah-Weekes}).  In fact it is also not hard to see that $\cS^{T_{q,p}}$ is isomorphic to $\overline{\affGr^{p n \varpi_1}} \cap \overline{\affGr^{q n \varpi_{n-1}}}$ and is in particular reduced.
\end{Remark}

\begin{Proposition}
  \label{prop:surjectivity}
  Let $e$ be an integer with $n \geq e \geq 1$.
The map $\phi : \widetilde{\cS^{T_{1,e-1}}} \rightarrow \cN_{n,e}$ is surjective. 
\end{Proposition}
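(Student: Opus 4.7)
The plan is to verify surjectivity on closed points; combined with the smoothness (hence flatness) of $\phi$ from Theorem \ref{thm:pappas-rapoport-smoothness}, this suffices to get surjectivity as a morphism of schemes. Write $V = t^{-(e-1)} L_0 / tL_0$ and $S = \kk[t]/(t^e)$. The key identification is that $V$, viewed as an $S$-module via $T_{1,e-1}$, is \emph{free} of rank $n$, with free generators the classes of $t^{-(e-1)} e_1, \ldots, t^{-(e-1)} e_n$---this is exactly the statement that $T_{1,e-1}$ has Jordan type $(e^n)$. On the other side, each closed point $A \in \cN_{n,e}$ turns $\kk^n$ into an $S$-module (call it $\kk^n_A$) by letting $t$ act as $A$. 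Unwinding the definition of $\phi$, a closed point of $\widetilde{\cS^{T_{1,e-1}}}$ lying over $A$ is precisely an $S$-linear embedding $\psi: \kk^n_A \hookrightarrow V$: the image $U := \psi(\kk^n)$ is then automatically $T$-invariant of dimension $n$, and $T|_U$ is conjugate to the nilpotent matrix $A$, so $U \in \leftexp{'}{\cS^{T_{1,e-1}}} = \cS^{T_{1,e-1}}$ by Theorem \ref{thm:primeST-eq-cST}.

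The task thus reduces to producing such an $S$-linear embedding $\kk^n_A \hookrightarrow S^n$ for every $A \in \cN_{n,e}$. For this I would decompose $\kk^n_A \cong \bigoplus_{j=1}^k \kk[t]/(t^{\lambda_j})$ via the structure theorem over the PID $\kk[t]$, where $\lambda_1 \geq \cdots \geq \lambda_k$ is the Jordan type of $A$. The hypotheses $A^e = 0$ and $\sum_j \lambda_j = n$ yield the two key inequalities $\lambda_j \leq e$ for all $j$ and $k \leq n$. Choosing free generators $f_1, \ldots, f_n$ of $V \cong S^n$, I would define $\psi$ on the $j$-th cyclic summand by sending its generator to $t^{e-\lambda_j} f_j$. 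This is well-defined since $t^{e-\lambda_j} f_j$ is annihilated by $t^{\lambda_j}$, and it is injective on each summand because the annihilator of $t^{e-\lambda_j} f_j$ in $S$ is exactly $(t^{\lambda_j})$. Because distinct cyclic summands land in distinct free summands of $V$, the resulting global map $\psi$ is an $S$-linear embedding, and then $(U,\psi)$ with $U := \psi(\kk^n)$ furnishes the desired preimage of $A$ under $\phi$.

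I do not anticipate a significant obstacle here: the entire argument reduces surjectivity to the elementary observation that every $\kk[t]/(t^e)$-module of length $n$ embeds into the free rank-$n$ module $S^n$, which is enabled precisely by the two inequalities $\lambda_j \leq e$ and $k \leq n$. The only mild point needing attention is the bookkeeping of using distinct free summands of $V$ for distinct Jordan blocks of $A$, which is possible exactly because $k \leq n$.
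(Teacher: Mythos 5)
Your proof is correct and follows essentially the same route as the paper: the paper reduces to checking that each nilpotent orbit $\OO_\sigma \subseteq \overline{\OO_\tau}$ meets the image of the $\GL_n$-equivariant map $\phi$ and dismisses the rest as ``a straightforward calculation,'' which is precisely the calculation you carry out explicitly by embedding the length-$n$ module $\kk^n_A$ into the free $\kk[t]/(t^e)$-module $V \cong S^n$ block by block. Your write-up is a valid and complete filling-in of that omitted step.
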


\begin{proof}
As surjectivity is a set-theoretic statement, it suffices to work with reduced schemes.
Dividing with remainder, write $n = c e + f$ where $0 \leq f < e$, and let $\tau = (e^c,f)$ be the partition with $c$ parts of size $e$ and one part of size $f$. Then the reduced scheme of $\cN_{n,e}$ is exactly the nilpotent orbit closure $\overline{\OO_\tau} \subseteq \Mat_{n\times n}$. The map $\phi$ is $\GL_n$-equivariant, so it suffices to check that every nilpotent orbit $\OO_\sigma \subset \overline{\OO_\tau}$ has non-empty intersection with the image. This is a straightforward calculation.
\end{proof}

\begin{Theorem}
  \label{thm:main-theorem}
 For every $n$, $e$ with $1 \leq e \leq n$, the scheme $\cN_{n,e}$ is reduced. 
\end{Theorem}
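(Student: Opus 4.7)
My plan is to exhibit $\cN_{n,e}$ as the image of a reduced scheme under a smooth surjection, and then conclude by faithfully flat descent of reducedness.

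The edge cases $e=1$ (where $A^1 = 0$ forces $A = 0$) and $n=1$ (where $\det(\lambda - A) = \lambda$ forces $A = 0$) both make $\cN_{n,e} = \Spec \kk$, which is trivially reduced. So I restrict to the range $2 \leq e \leq n$.

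The main argument uses $T_{1,e-1}$, which acts on $V = t^{-(e-1)}L_0/tL_0$ as multiplication by $t$ and has Jordan type $(e^n)$. By Theorem \ref{thm:pappas-rapoport-smoothness} (applied with $d=n$), the morphism $\phi : \widetilde{\cS^{T_{1,e-1}}} \to \cN_{n,e}$ is smooth, and by Proposition \ref{prop:surjectivity} it is surjective. So it remains only to show that the source is reduced. Since $\widetilde{\cS^{T_{1,e-1}}} \to \cS^{T_{1,e-1}}$ is a $\GL_n$-torsor (and so is itself smooth), this reduces to showing that $\cS^{T_{1,e-1}}$ is reduced, which is precisely the content of Proposition \ref{prop:intersecting-schuberts} applied with $p = e-1$ and $q = 1$: under $2 \leq e \leq n$ one computes $M = \min\{1,(e-1)(n-1)\} = 1$ and $N = \min\{e-1, n-1\} = e-1$, so $\cS^{T_{M,N}} = \cS^{T_{1,e-1}}$.

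The final step is formal: a smooth surjection is faithfully flat, so the unit $\mathcal{O}_{\cN_{n,e}} \hookrightarrow \phi_* \mathcal{O}_{\widetilde{\cS^{T_{1,e-1}}}}$ is an injection of sheaves of rings; any nilpotent local section of the target would pull back to a nonzero nilpotent section of the source, contradicting reducedness. Hence $\cN_{n,e}$ is reduced. The real content has already been expended in the preceding propositions; the main obstacle was Proposition \ref{prop:intersecting-schuberts}, which combined the Frobenius splitting of affine Schubert varieties with the identification (Proposition \ref{prop:identifying-first-schuberts}) of the extremal $\cS^T$ as honest Schubert varieties. Nothing further is required for Theorem \ref{thm:main-theorem} itself.
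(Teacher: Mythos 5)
Your proof is correct and follows essentially the same route as the paper: reduce to $\cS^{T_{1,e-1}}$ via Proposition \ref{prop:intersecting-schuberts} (with $p=e-1$, $q=1$), pass through the $\GL_n$-torsor, and descend reducedness along the smooth surjection $\phi$. Your explicit treatment of the cases $e=1$ and $n=1$ (where Proposition \ref{prop:intersecting-schuberts} does not literally apply since it requires $p\geq 1$) is a small but welcome addition of care beyond what the paper records.
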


\begin{proof}
The scheme $\cS^{T_{1,e-1}}$ is reduced by Proposition \ref{prop:intersecting-schuberts} (choosing $p=e-1$ and $q=1$).  The map $\widetilde{\cS^{T_{1,e-1}}} \rightarrow \cS^{T_{1,e-1}}$ is a torsor for $\GL_n$, so $\widetilde{\cS^{T_{1,e-1}}}$ is also reduced.  By Theorem \ref{thm:pappas-rapoport-smoothness} and Proposition \ref{prop:surjectivity}, the map $\phi : \widetilde{\cS^{T_{1,e-1}}} \rightarrow \cN_{n,e}$ is smooth and surjective. In particular, it is faithfully flat, and the property of being reduced descends along faithfully flat morphisms.
\end{proof}

\subsection{Another proof}
\label{sec:another-proof}

After closely inspecting the above argument, we found the following direct proof of Theorem \ref{thm:main-theorem}. 
The idea is essentially the same as the more conceptual proof above, but gets to the punchline quickly and by more elementary means. The only non-elementary ingredient is the Frobenius splitting of Schubert varieties.

\begin{proof}[Another proof of Theorem \ref{thm:main-theorem}]

  For any $p \geq 0$, define $Z_p$ to be the closed subscheme of $\Mat_{n \times n}[t^{-1}]$ that represents the functor sending any test ring $R$ to:
  \begin{align*}
Z_{p}(R) = \left\{ 1 + B_1 t^{-1} +  \cdots + B_{p} t^{-p} \in \Mat_{n \times n}(R[t^{-1}]) \suchthat \det(1 + B_1 t^{-1} \cdots + B_{p} t^{p)}) = 1 \right\}
  \end{align*}
  We see that $Z_p$ is a subscheme of $\SL_n^{(1)}[t^{-1}]$, and it is equal as a scheme to the intersection $\overline{\Gr^{pn \varpi_1 }} \cap \SL_n^{(1)}[t^{-1}]$ by \cite[Corollary 2]{Kamnitzer-Muthiah-Weekes} (see also the proof of \cite[Proposition 6.1]{Beauville-Laszlo}). In particular, $Z_p$ is reduced.

Let $X$ be the closed subscheme $\Mat_{n \times n}[t^{-1}]$ that represents the functor:
  \begin{align*}
X(R) = \left\{ 1 + C_1 t^{-1} + \cdots + C_{n-1} t^{-(n-1)}  \in \Mat_{n \times n}(R[t^{-1}]) \suchthat \det(1 - C_1 t^{-1})   = 1 \textand   C_i = C_1^i \right\}
  \end{align*}
  Observe that the condition $\det(1 - C_1 t^{-1})=1$ is equivalent to requiring that the characteristic polynomial of $C_1$ is $\lambda^n$. By the Cayley-Hamilton theorem, we have $C_1^n = 0$. Therefore,  
$$
1 + C_1 t^{-1} + \cdots + C_{n-1} t^{-(n-1)} = (1 - C_1t^{-1})^{-1}
$$
and $\det(1 + C_1 t^{-1} + \cdots + C_{n-1} t^{-(n-1)}) = 1$. So $X$ is equal to a closed subscheme of $\overline{\Gr^{(n-1)n \varpi_1 }} \cap \SL_n^{(1)}[t^{-1}]$.

The involution \eqref{eq: involution on big cell of GL_n} of $\SL_n^{(1)}[t^{-1}]$ gives an isomorphism between $X$ and $Z_1$.  We therefore conclude that $X$ is equal as a scheme to the intersection $\overline{\Gr^{n \varpi_{n-1} }} \cap \SL_n^{(1)}[t^{-1}]$. In particular, $X$ is reduced. More directly, the map $X \rightarrow \Mat_{n \times n}$ sending 
$1 + C_1 t^{-1} + \cdots + C_{n-1} t^{-(n-1)}$ to $C_1$ induces an isomorphism between $X$ and the nilpotent cone $\cN$. 

Now consider the scheme theoretic intersection $X \cap Z_{e-1}$, which we compute inside $Z_{n-1}$. Because $Z_{n-1}$ admits a Frobenius splitting compatible with $X$ and $Z_{e-1}$, the intersection $X \cap Z_{e-1}$ is reduced.

On the other hand, we directly compute $(X \cap Z_{e-1})(R)$ to be the set of $1 + C_1 t^{-1} + \cdots + C_{n-1} t^{-(n-1)} \in \Mat_{n \times n}(R[t^{-1}])$ such that $\det(1 - C_1 t^{-1})   = 1$ , $C_i = C_1^i$, and $C_i = 0$ for $i \geq e$.  Therefore, under the isomorphism $X \rightarrow \cN$, we see that the $X \cap Z_{e-1}$ maps isomorphically to $\cN_{n,e}$.
\end{proof}

Recall that all schemes of the form $\cS^{T_{a,b}}$ are reduced (Proposition \ref{prop:intersecting-schuberts}, Remark \ref{rmk:intersecting-schuberts}). We can also argue this directly using the proof above.  For all $a,b$, the map $\phi : \widetilde{\cS^{T_{a,b}}} \rightarrow \cN_{na,a+b}$ is smooth (but not necessarily surjective).  Because reducedness ascends along smooth maps, and $\widetilde{\cS^{T_{a,b}}}\rightarrow \cS^{T_{a,b}}$ is a torsor for $\GL_{na}$, we obtain another proof of the following: % of Theorem \ref{thm:main-theorem}.
\begin{Corollary}
  \label{cor:ST-ab-is-reduced}
  For all $a,b \geq 1$, the scheme $\cS^{T_{a,b}}$ is reduced.
\end{Corollary}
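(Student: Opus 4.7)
The plan is to reverse the direction of the implication used in the proof of Theorem \ref{thm:main-theorem}: there, reducedness of $\cS^{T_{1,e-1}}$ (established by intersecting Schubert varieties) was used to deduce reducedness of $\cN_{n,e}$ via the smooth surjection $\phi$. Now, having $\cN_{na, a+b}$ reduced in hand from Theorem \ref{thm:main-theorem}, I will propagate reducedness backward along $\phi$ (and then along the Stiefel torsor) to conclude that $\cS^{T_{a,b}}$ is reduced.

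First I would check that Theorem \ref{thm:pappas-rapoport-smoothness} actually applies to $T_{a,b}$. By construction, $T_{a,b}$ is multiplication by $t$ on $V := t^{-b}L_0/t^a L_0$, which is a free $\kk[t]/t^{a+b}$--module of rank $n$. Equivalently, $T_{a,b}$ has Jordan type $\big((a+b)^n\big)$, so with the notation of \S\ref{sec:smoothness-of-the-map-phi} we take $e = a+b$ and $d = n$. Since $T_{a,b}^{\,a+b} = 0$, the map $\phi$ lands scheme-theoretically in $\cN_{na, a+b}$, and Theorem \ref{thm:pappas-rapoport-smoothness} provides that
\[
  \phi : \widetilde{\cS^{T_{a,b}}} \longrightarrow \cN_{na,\, a+b}
\]
is smooth of relative dimension $n^2 a$. (No surjectivity is required here.)

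Next, by Theorem \ref{thm:main-theorem}, the scheme $\cN_{na, a+b}$ is reduced. Since smooth morphisms are in particular flat with geometrically reduced fibers, the preimage of a reduced scheme under a smooth morphism is reduced; applying this to $\phi$ shows that $\widetilde{\cS^{T_{a,b}}}$ is reduced. Finally, $\widetilde{\cS^{T_{a,b}}} \to \cS^{T_{a,b}}$ is a $\GL_{na}$--torsor, hence faithfully flat (in fact smooth and surjective), and reducedness descends along faithfully flat morphisms. Therefore $\cS^{T_{a,b}}$ is reduced.

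There is no real obstacle: the argument is entirely a recycling of previously established tools. The only point that warrants a moment's care is the direction of the implication for smooth morphisms -- specifically, that one uses ``smooth pullback preserves reducedness'' for the first step, and ``faithfully flat descent of reducedness'' for the second -- as well as the verification that $T_{a,b}$ has the rectangular Jordan type required to invoke Theorem \ref{thm:pappas-rapoport-smoothness}.
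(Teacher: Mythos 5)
Your proposal is correct and is essentially identical to the paper's own argument: the paper also deduces the corollary by noting that $\phi : \widetilde{\cS^{T_{a,b}}} \rightarrow \cN_{na,a+b}$ is smooth (though not necessarily surjective), that reducedness ascends along smooth maps once $\cN_{na,a+b}$ is known to be reduced, and that it then descends along the $\GL_{na}$--torsor $\widetilde{\cS^{T_{a,b}}} \rightarrow \cS^{T_{a,b}}$. Your explicit verification that $T_{a,b}$ has the rectangular Jordan type $\big((a+b)^n\big)$ required by Theorem \ref{thm:pappas-rapoport-smoothness} is a welcome extra detail the paper leaves implicit.
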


\bibliographystyle{amsalpha}
\bibliography{on_a_conjecture_of_pappas_and_rapoport}

\end{document}